\DeclareMathAlphabet{\mathpzc}{OT1}{pzc}{m}{it}
\newcommand{\R}{\mathbb{R}}
\newcommand{\calX}{\mathcal{X}}
\newcommand{\calY}{\mathcal{Y}}
\newcommand{\bu}{\mathbf{u}}
\newcommand{\bv}{\mathbf{v}}
\newcommand{\GRAD}{\nabla}
\DeclareMathOperator{\DIV}{div}
\newcommand{\ie}{i.e.,\@\xspace}
\newcommand{\bF}{{\mathbf{F}}}
\newcommand{\bL}{{\mathbf{L}}}
\newcommand{\bW}{{\mathbf{W}}}
\newcommand{\bH}{{\mathbf{H}}}
\newcommand{\bef}{{\mathbf{f}}}
\newcommand{\be}{{\mathbf{e}}}
\newcommand{\bX}{{\mathbf{X}}}
\newcommand{\bw}{{\mathbf{w}}}
\newcommand{\pe}{{\mathsf{p}}}
\newcommand{\ue}{{\mathsf{u}}}
\newcommand{\calS}{\mathcal{S}}
\newcommand{\NL}{\mathcal{NL}}
\newcommand{\calF}{\mathcal{F}}
\newcommand{\calT}{\mathcal{T}}
\newcommand{\calB}{\mathcal{B}}
\newcommand{\vare}{\varepsilon}
\newcommand{\TheTitle}{A weighted setting for the stationary Navier Stokes equations under singular forcing}
\newtheorem{lemma}{Lemma}
\newtheorem{theorem}{Theorem}
\newtheorem{proposition}{Proposition}
\newtheorem{corollary}{Corollary}
\newtheorem{definition}{Definition}
\newproof{proof}{Proof}
\newdefinition{remark}{Remark}
\journal{arXiv}
\begin{document}

\begin{frontmatter}
\title{{\TheTitle}\tnoteref{t1}}
\tnotetext[t1]{EO has been partially supported by CONICYT through FONDECYT project 11180193. AJS has been partially supported by NSF grant DMS-1720213.}
\author[1]{Enrique Ot\'arola}
\ead{enrique.otarola@usm.cl}
\ead[url]{http://eotarola.mat.utfsm.cl/}

\author[2]{Abner J.~Salgado}
\ead{asalgad1@utk.edu}
\ead[url]{http://www.math.utk.edu/~abnersg}

\address[1]{Departamento de Matem\'atica, Universidad T\'ecnica Federico Santa Mar\'ia, Valpara\'iso, Chile.}
\address[2]{Department of Mathematics, University of Tennessee, Knoxville, TN 37996, USA.}

\begin{abstract}
In two dimensions, we show existence of solutions to the stationary Navier Stokes equations on weighted spaces $\bH^1_0(\omega,\Omega) \times L^2(\omega,\Omega)$, where the weight belongs to the Muckenhoupt class $A_2$. We show how this theory can be applied to obtain a priori error estimates for approximations of the solution to the Navier Stokes problem with singular sources.
\end{abstract}

\begin{keyword}
Navier Stokes equations \sep Muckenhoupt weights \sep weighted estimates \sep a priori error estimates, singular sources.

\MSC{
35Q35,         
35Q30,         
35R06,          
65N15,          
65N30,          
76Dxx.          
}
\end{keyword}

\end{frontmatter}

\section{Introduction}
\label{sec:intro}

Let $\Omega \subset \R^2$ be an open and bounded domain with Lipschitz boundary $\partial \Omega$. In this work we will be interested in developing an existence and approximation theory for the Navier Stokes problem
\begin{equation}
\label{eq:NSEStrong}
    -\nu \Delta \ue + (\ue \cdot \GRAD) \ue + \GRAD \pe = \bef, \text{ in } \Omega, \qquad
    \DIV \ue = 0, \text{ in } \Omega, \qquad
    \ue = 0, \text{ on } \partial\Omega,
\end{equation}
in the case where the forcing term $\bef$ is singular. Here, the unknowns are $\ue$ and $\pe$, the velocity and pressure, respectively. The data are the forcing term $\bef$ and the kinematic viscosity $\nu > 0$.

Essentially, by introducing a weight, we can allow for forces such that $\bef \notin \bH^{-1}(\Omega)$. In particular, our theory will allow the following particular examples. For a fixed $\bF \in \R^d$, we can set $\bef = \bF \delta_z$, where $\delta_z$ denotes the Dirac delta supported at the interior point $z \in \Omega$. Similarly, if $\Gamma$ denotes a smooth closed curve contained in $\Omega$, we can allow the components of $\bef$ to be measures supported in $\Gamma$.

We must remark that the study of \eqref{eq:NSEStrong} in a non energy setting is not a new idea. For instance, \cite{MR753154} assumes that the domain is smooth, but deals with rough forcings $\bef \in \bW^{-1,p}(\Omega)$ with $p \in (d/2,2)$ ($d$ is the space dimension). A nonenergy setting and weights are commonly used in the exterior problem for \eqref{eq:NSEStrong}; see, for instance, \cite{MR1407336,MR2551492}. Closely related to our work is \cite{MR2272870}, where an existence and uniqueness theory in $C^{1,1}$ domains is developed over weighted spaces, and under the assumption that the data is small. Our main novelty here is that we only assume tha domain to be Lipschitz, and we provide existence for arbitrary data. In addition, when the domain is a convex polygon, we show convergence of a finite element scheme.

\section{Weak formulation}
\label{sec:weak}

We begin by recalling that, if $\bv$ is sufficiently smooth and solenoidal, then the convective term can be rewritten as
$
  (\bv \cdot \GRAD) \bv = \DIV ( \bv \otimes \bv).
$
This will be used in the weak formulation of \eqref{eq:NSEStrong}.


Let $\omega$ be a weight. We define $\calX = \bH^1_0(\omega,\Omega) \times L^2(\omega,\Omega)/\R$ and $\calY = \bH^1_0(\omega^{-1},\Omega) \times L^2(\omega^{-1},\Omega)/\R$. We propose the following weak formulation of problem \eqref{eq:NSEStrong}. Given $\bef \in ( \bH^1_0(\omega^{-1},\Omega))'$, find $(\ue,\pe) \in \calX$ such that
\begin{equation}
\label{eq:NSEVar}
  \int_\Omega \left( \nu \GRAD  \ue : \GRAD \bv - \ue \otimes \ue : \GRAD \bv - \pe \DIV \bv\right)  = \langle \bef, \bv \rangle \ \forall \bv \in \bH^1_0(\omega^{-1},\Omega), \quad
  \int_\Omega \DIV \ue q = 0 \ \forall q \in L^2(\omega^{-1},\Omega)/\R.
\end{equation}
Here and in what follows, by $\langle \cdot, \cdot\rangle$ we denote a duality pairing.

An application of the Cauchy--Schwarz inequality reveals that, for $(\ue,\pe) \in \calX$ and $(\bv,q) \in \calY$, the terms
$\nu \int_\Omega \GRAD \ue : \GRAD \bv$, $\int_\Omega \pe \DIV \bv$, and $\int_\Omega q \DIV \ue$
are bounded. The following result provides suitable assumptions on the weight, that guarantee the boundedness of the convective term.

\begin{lemma}[boundedness of convection]
\label{lem:convbdd}
If $\omega \in A_2$, then, for $\bv \in \bH^1_0(\omega,\Omega)$ and $\bw \in \bH^1_0(\omega^{-1},\Omega)$, we have
\[
  \left| \int_\Omega \bv \otimes \bv : \GRAD \bw \right| \leq C \| \GRAD \bv \|_{\bL^2(\omega,\Omega)}^2 \| \GRAD \bw \|_{\bL^2(\omega^{-1},\Omega)}.
\]
\end{lemma}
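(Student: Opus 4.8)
The plan is to separate the quadratic factor in $\bv$ from the gradient of $\bw$ by a weighted Cauchy--Schwarz inequality, and then to absorb the quadratic factor by means of a weighted Sobolev embedding. First I would record the pointwise bound
\[
  |\bv \otimes \bv : \GRAD \bw| = \Big| \sum_{i,j} v_i v_j \, \partial_j w_i \Big| \leq |\bv|^2 \, |\GRAD \bw|,
\]
which follows from the Cauchy--Schwarz inequality applied to the sum over the index pairs $(i,j)$, since $\sum_{i,j} v_i^2 v_j^2 = |\bv|^4$ and $\sum_{i,j} |\partial_j w_i|^2 = |\GRAD \bw|^2$. Splitting the weight as $\omega^{1/2} \cdot \omega^{-1/2} = 1$ and applying the Cauchy--Schwarz inequality with respect to the Lebesgue measure then gives
\[
  \left| \int_\Omega \bv \otimes \bv : \GRAD \bw \right|
    \leq \int_\Omega \big( |\bv|^2 \omega^{1/2} \big)\big( |\GRAD \bw| \, \omega^{-1/2} \big)
    \leq \| \bv \|_{\bL^4(\omega,\Omega)}^2 \, \| \GRAD \bw \|_{\bL^2(\omega^{-1},\Omega)}.
\]

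At this point the estimate is reduced to the weighted Sobolev embedding $\bH^1_0(\omega,\Omega) \hookrightarrow \bL^4(\omega,\Omega)$, that is, to the inequality $\| \bv \|_{\bL^4(\omega,\Omega)} \leq C \| \GRAD \bv \|_{\bL^2(\omega,\Omega)}$, which is exactly what is needed to close the bound with the stated right-hand side. To justify this embedding I would use that $\omega \in A_2$ is doubling and satisfies a weighted $(1,2)$-Poincaré inequality on balls, so that $(\Omega, \omega \diff x)$ is a space of homogeneous type; the self-improving property of Poincaré inequalities then upgrades the $(1,2)$-estimate to a $(2\kappa,2)$-Sobolev--Poincaré inequality for some $\kappa > 1$ determined by the $A_2$ (equivalently, reverse Hölder) constant of $\omega$ and by the dimension $d = 2$. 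An alternative, more hands-on route is to combine the reverse Hölder inequality for $\omega$ with the classical unweighted Sobolev embedding in two dimensions, after first gaining integrability of $\GRAD \bv$ in an unweighted $\bL^r(\Omega)$ with $r>1$ via $\omega^{-1} \in A_2$. Finally, the homogeneous boundary condition encoded in $\bH^1_0(\omega,\Omega)$ lets me dispense with the subtraction of mean values and pass from the averaged, ball-by-ball Sobolev--Poincaré estimates to a genuine global inequality on $\Omega$, yielding the claimed form.

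The step I expect to be the crux is precisely this weighted Sobolev embedding into $\bL^4$: one must check that the integrability exponent furnished by the $A_2$ structure actually reaches $4$ in the two-dimensional, borderline regime $p = d = 2$, and that the local Sobolev--Poincaré estimates glue into a global bound on the bounded Lipschitz domain $\Omega$ compatible with the zero trace. The remaining manipulations---the pointwise bound and the weighted Cauchy--Schwarz splitting---are routine once the embedding is in hand.
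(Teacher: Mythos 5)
Your proposal is correct and follows essentially the same route as the paper: the identical weight-splitting Cauchy--Schwarz reduction $\bv \otimes \bv : \GRAD \bw = (\omega^{1/4}\bv)\otimes(\omega^{1/4}\bv):(\omega^{-1/2}\GRAD\bw)$ followed by the embedding $\bH^1_0(\omega,\Omega) \hookrightarrow \bL^4(\omega,\Omega)$, except that the paper simply cites \cite[Theorem 1.3]{MR643158} (Fabes--Kenig--Serapioni) for the embedding, whose content is exactly your primary route --- a weighted Sobolev--Poincar\'e inequality in which the open-ended (reverse H\"older) property of $A_2$ is what pushes the exponent past the borderline value $2k = 2\cdot d/(d-1) = 4$ in dimension $d=2$, as the paper's Remark~\ref{rem:2d} emphasizes. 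One caution: your ``more hands-on'' alternative (gaining unweighted integrability of $\GRAD\bv$ via $\omega^{-1}\in A_2$, then chaining with the classical Sobolev embedding and a reverse H\"older bound for $\omega$) does not in general reach $\bL^4(\omega,\Omega)$, since the reverse H\"older exponents of $\omega$ and $\omega^{-1}$ can each be arbitrarily close to $1$ and the H\"older chaining then loses too much; stick with the FKS-type result.
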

\begin{proof}
According to \cite[Theorem 1.3]{MR643158}, since we are in two dimensions and $\omega \in A_2$, we have $H^1_0(\omega,\Omega) \hookrightarrow L^4(\omega,\Omega)$. We can thus write
\[
  \left| \int_\Omega \bv \otimes \bv : \GRAD \bw \right| = \left| \int_\Omega (\omega^{\frac{1}{4}}\bv) \otimes (\omega^{\frac{1}{4}}\bv) : (\omega^{-\frac{1}{2}}\GRAD \bw) \right| \leq \| \bv \|_{\bL^4(\omega,\Omega)}^2 \| \GRAD \bw \|_{\bL^2(\omega^{-1},\Omega)}.
\]
Conclude by using the aforementioned embedding.
\qed\end{proof}

\begin{remark}[two dimensions]
\label{rem:2d}
It is in this result that the assumption $D \subset \R^2$ plays an essential r\^ole. Indeed, \cite[Theorem 1.3]{MR643158} states that, if $D \subset \R^d$ and $\varpi \in A_2$, then there is $\delta>0$ such that $H^1_0(\varpi,D) \hookrightarrow L^{2k}(\varpi,D)$ for $k \in [1,d/(d-1) + \delta]$. In three dimensions then, we only have $H^1_0(\varpi,D) \hookrightarrow L^3(\varpi,D)$ and, at least with this approach, we cannot show boundedness of the convective term.
\end{remark}

The next definition, that is inspired by \cite[Definition 2.5]{MR1601373}, will be of importance for the analysis that follows.
\begin{definition}[class $A_p(D)$]
\label{def:ApOmega}
Let $D \subset \R^d$ be a Lipschitz domain. For $p \in (1, \infty)$ we say that $\omega \in A_p$ belongs to $A_p(D)$ if there is an open set $\mathcal{G} \subset D$, and positive constants $\varepsilon>0$ and $\omega_l>0$, such that:
$\{ x \in \Omega: \mathrm{dist}(x,\partial D)< \varepsilon\} \subset \mathcal{G}$,
$\omega \in C(\bar {\mathcal{G}})$, and $\omega_l \leq \omega(x)$ for all $x \in \bar{\mathcal{G}}$.
\end{definition}

\section{Existence of solutions}
\label{sec:existence}

Having defined our weak formulation, here we show existence of solutions and, under a smallness assumption on the data, their uniqueness. To do so, let us define the mappings $\calS: \calX \to \calY'$, $\NL: \calX \to \calY'$, and $\calF \in \calY'$ by
\[
  \langle \calS(\bu,p), (\bv,q) \rangle = \int_\Omega \left(\GRAD \bu : \GRAD \bv - p \DIV \bv -\DIV \bu q\right), \quad
  \langle \NL(\bu,p), (\bv,q) \rangle = -\int_\Omega \bu \otimes \bu : \GRAD \bv, \quad
  \langle \calF, (\bv,q) \rangle = \langle \bef, \bv \rangle.
\]
Notice that \eqref{eq:NSEVar} can be equivalently written as the following operator equation in $\calY'$
\[
  \calS(\nu \ue,p) + \NL(\ue,p) = \calF.
\]

As we have shown above, $\calS$ is a bounded linear operator. Moreover, \cite[Theorem 17]{OS:17infsup} shows that, if $\Omega$ is Lipschitz and $\omega \in A_2(\Omega)$, then this operator has a bounded inverse. This allows us to define the operator $\calT: \calX \to \calX$ via
\[
  (\nu\bu,p) = \calT(\bw,r) = \calS^{-1} \left[ \calF - \NL(\bw,r) \right].
\]
Therefore, showing existence of a solution amounts to finding a fixed point of the mapping $\calT$. We will show existence and uniqueness for sufficiently small data, and existence for general data.

\subsection{Existence and uniqueness for small data}
\label{sub:smalldata}

Let us first show, via a contraction argument, that provided the problem data is sufficiently small we have existence and uniqueness of solutions. Our contraction argument is rather standard, see for instance \cite[Theorem 3.1]{MR2413675} and \cite[Theorem 5.6]{MR2272870}. The main novelty in our approach seems to be the fact that, by restricting the weight to $A_2(\Omega)$, we allow the domain to be merely Lipschitz. We begin by defining, for $A>0$,
\begin{equation}
\label{eq:defofcalB}
  \calB_A = \left\{ \bw \in \bH^1_0(\omega,\Omega): \DIV \bw = 0,  \ \| \GRAD \bw \|_{\bL^2(\omega,\Omega)} \leq A \right\}.
\end{equation}

In what follows, by $\|\calS^{-1} \|$ we denote the $\calY' \to \calX$ norm of $\calS^{-1}$, and by $C_{4 \to 2}$ we denote the constant in the embedding $\bH_0^1(\omega,\Omega) \hookrightarrow \bL^4(\omega,\Omega)$ which was used in Lemma~\ref{lem:convbdd}.

\begin{proposition}[contraction]
\label{prop:contraction}
Let $\Omega$ be Lipschitz and $\omega \in A_2(\Omega)$. Assume that the forcing term $\bef$ is sufficiently small, or the viscosity is sufficiently large, so that
\begin{equation}
\label{eq:smallness}
  \frac{C_{4\to2}^2 \| \calS^{-1} \|^2  \| \bef \|_{\bH^{-1}(\omega,\Omega)}}{\nu^2} < \frac16.
\end{equation}
Define
$
  A = \tfrac\nu{3C_{4\to2}^2\| \calS^{-1} \|}
$.
With these assumptions, the mapping $\calT_1: \bH^1_0(\omega,\Omega) \to \bH^1_0(\omega,\Omega)$ defined as 
$
  \bw \mapsto \frac1\nu \Pr \calT(\bw,0)
$,
where $\Pr : \calX \to \bH^1_0(\omega,\Omega)$ is the projection onto the velocity component, maps $\calB_A$ to itself and it is a contraction in it. 
\end{proposition}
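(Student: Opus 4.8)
The plan is to verify the two defining properties of a contraction on the closed ball $\calB_A$ separately: first that $\calT_1$ maps $\calB_A$ into itself, and then that it is Lipschitz on $\calB_A$ with constant strictly below one. Throughout I will unwind the definition $\calT_1(\bw) = \frac1\nu \Pr \calT(\bw,0)$: writing $(\nu\bu,p) = \calT(\bw,0) = \calS^{-1}[\calF - \NL(\bw,0)]$ we have $\calT_1(\bw) = \bu$, where $(\nu\bu,p)$ solves the Stokes problem $\calS(\nu\bu,p) = \calF - \NL(\bw,0)$ in $\calY'$. Since both $\calF$ and $\NL(\bw,0)$ pair trivially with the pressure test component $q$, the divergence equation of this Stokes system forces $\DIV \bu = 0$; hence the solenoidality constraint in the definition of $\calB_A$ is satisfied automatically, and only the size bound on $\GRAD \bu$ has to be checked.

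For the self-mapping property I would estimate, using the $\calY'\to\calX$ bound for $\calS^{-1}$ and the fact that the $\calX$-norm controls $\nu\|\GRAD\bu\|_{\bL^2(\omega,\Omega)}$,
\[
  \nu \| \GRAD \calT_1(\bw) \|_{\bL^2(\omega,\Omega)} \leq \| \calS^{-1} \| \left( \| \calF \|_{\calY'} + \| \NL(\bw,0) \|_{\calY'} \right).
\]
Here $\|\calF\|_{\calY'} = \|\bef\|_{\bH^{-1}(\omega,\Omega)}$, while Lemma~\ref{lem:convbdd} gives $\|\NL(\bw,0)\|_{\calY'} \leq C_{4\to2}^2 \|\GRAD\bw\|_{\bL^2(\omega,\Omega)}^2 \leq C_{4\to2}^2 A^2$ for $\bw\in\calB_A$. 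Inserting the prescribed value $A = \nu/(3C_{4\to2}^2\|\calS^{-1}\|)$ one computes $\|\calS^{-1}\| C_{4\to2}^2 A^2 = \tfrac13 \nu A$, so that the target inequality $\nu\|\GRAD\calT_1(\bw)\|_{\bL^2(\omega,\Omega)} \le \nu A$ reduces to $\|\calS^{-1}\|\|\bef\|_{\bH^{-1}(\omega,\Omega)} \le \tfrac23 \nu A$, which is precisely where the smallness hypothesis \eqref{eq:smallness} enters and closes the estimate.

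For the contraction property, take $\bw_1,\bw_2 \in \calB_A$ and set $\bu_i = \calT_1(\bw_i)$. By linearity of $\calS^{-1}$ the difference satisfies $(\nu(\bu_1-\bu_2), p_1-p_2) = \calS^{-1}[\NL(\bw_2,0) - \NL(\bw_1,0)]$, and the algebraic identity $\bw_1\otimes\bw_1 - \bw_2\otimes\bw_2 = (\bw_1-\bw_2)\otimes\bw_1 + \bw_2\otimes(\bw_1-\bw_2)$ lets me write the right-hand side as a sum of two bilinear convective terms. Applying the bilinear version of Lemma~\ref{lem:convbdd} (obtained verbatim by splitting the weight as $\omega^{1/4}\cdot\omega^{1/4}\cdot\omega^{-1/2}$ and using H\"older with exponents $4,4,2$ together with the embedding) to each term yields $\|\NL(\bw_2,0)-\NL(\bw_1,0)\|_{\calY'} \le 2 C_{4\to2}^2 A \,\|\GRAD(\bw_1-\bw_2)\|_{\bL^2(\omega,\Omega)}$, since both factors $\|\GRAD\bw_i\|_{\bL^2(\omega,\Omega)}$ are bounded by $A$. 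Combining with the operator bound gives a Lipschitz constant $L = 2\|\calS^{-1}\|C_{4\to2}^2 A/\nu$, and substituting the value of $A$ produces $L = 2/3 < 1$, independent of the data.

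The argument is the standard Banach fixed point scheme, so the main work is technical rather than conceptual: the one ingredient not already available is the bilinear estimate for the convective term, which I would state and prove as a short extension of Lemma~\ref{lem:convbdd}. The remaining care is bookkeeping---tracking that the $\calX$-norm of $(\nu\bu,p)$ dominates $\nu\|\GRAD\bu\|_{\bL^2(\omega,\Omega)}$, keeping the factor $\|\calS^{-1}\|$ on the correct side, and checking that the chosen $A$ makes the quadratic self-mapping inequality compatible with the normalization that yields the clean contraction constant $2/3$. I expect the only mild subtlety to be that the self-mapping property, and not the contraction, is what actually consumes the smallness hypothesis, since the Lipschitz constant is fixed by the choice of $A$ alone.
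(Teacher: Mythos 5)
Your proposal is correct and follows essentially the same route as the paper's proof: the same self-mapping estimate driven by the smallness condition (the paper phrases it as $\| \calS^{-1} \| \, \| \bef \|_{\bH^{-1}(\omega,\Omega)}/\nu < A/2$, yielding the bound $5A/6$), and the same bilinear splitting $\bw_1\otimes\bw_1 - \bw_2\otimes\bw_2$ giving the Lipschitz constant $2/3$ via $\|\GRAD\bw_i\|_{\bL^2(\omega,\Omega)} \leq A$. Your explicit statement of the bilinear version of Lemma~\ref{lem:convbdd} and your observation that the smallness hypothesis is consumed only by the self-mapping step are both accurate and, if anything, slightly more careful than the paper's write-up.
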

\begin{proof}
Note, first of all, that the assumptions on the forcing term and viscosity can be summarized as
\[
  \frac{\| \calS^{-1} \| \ \| \bef \|_{\bH^{-1}(\omega,\Omega)}}{\nu} < \frac{A}2.
\]

Let us now show that $\calT_1$ maps $\calB_A$ to itself. Observe that, by definition of the mapping $\calT$, we have that, if $\bv = \calT_1(\bw)$, then $\DIV \bv = 0$ and
\begin{equation*}
  \| \GRAD \bv \|_{\bL^2(\omega,\Omega)} \leq \frac{\| \calS^{-1} \| \ \| \bef\|_{\bH^{-1}(\omega,\Omega)}}{\nu} +  \frac{ C_{4 \to 2}^2 \| \calS^{-1} \| \ \| \GRAD \bw \|_{\bL^2(\omega,\Omega)}^2}\nu 
    < \frac{A}2 + \frac{A}3 = \frac{5A}6,
\end{equation*}
where we used the data assumptions and the fact that $\bw \in \calB_A$.

We now show that $\calT_1$ is a contraction. For that, let $\bw_1,\bw_2 \in \calB_A$ and $\bv_i = \calT_1(\bw_i)$, for $i = 1,2$, respectively. Then, we have that
\begin{multline*}
  \| \GRAD(\bv_1 - \bv_2) \|_{\bL^2(\omega,\Omega)} \leq 
    \frac{C_{4\to2}^2 \| \calS^{-1} \|}\nu \left( \|\GRAD \bw_1 \|_{\bL^2(\omega,\Omega)} + \|\GRAD \bw_2 \|_{\bL^2(\omega,\Omega)} \right) \|\GRAD (\bw_1 - \bw_2) \|_{\bL^2(\omega,\Omega)}  \\
    \leq \frac{C_{4\to2}^2 \| \calS^{-1} \|}\nu \frac{2\nu}{3 C_{4\to2}^2 \| \calS^{-1} \|} \|\GRAD(\bw_1 - \bw_2) \|_{\bL^4(\omega,\Omega)}
    =
  \frac23 \|\GRAD(\bw_1 - \bw_2) \|_{\bL^4(\omega,\Omega)}.
\end{multline*}
This concludes the proof.
\qed\end{proof}

From this result we immediately obtain existence and uniqueness for small data.

\begin{corollary}[existence and uniqueness]
\label{cor:corContractionSmall}
Let $\Omega$ be Lipschitz and $\omega \in A_2(\Omega)$. Assume that the forcing term $\bef$ is sufficiently small, or the viscosity is sufficiently large, so that \eqref{eq:smallness} holds. In this setting, there is a unique solution of \eqref{eq:NSEVar}. Moreover, this solution satisfies the estimate
\[
  \| \GRAD \ue \|_{\bL^2(\omega,\Omega)} \leq \frac32 \frac{\| \calS^{-1}  \| \ \| \bef \|_{\bH^{-1}(\omega,\Omega)}}{\nu}.
\]
\end{corollary}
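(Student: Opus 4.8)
The plan is to deduce the statement directly from Proposition~\ref{prop:contraction} via the Banach fixed point theorem. First I would verify that the set $\calB_A$ from \eqref{eq:defofcalB}, equipped with the metric induced by the norm $\|\GRAD \cdot\|_{\bL^2(\omega,\Omega)}$, is a nonempty complete metric space: it contains $\mathbf{0}$, and it is the intersection of the closed ball of radius $A$ with the kernel of the bounded operator $\DIV : \bH^1_0(\omega,\Omega) \to L^2(\omega,\Omega)$, both of which are closed subsets of the Banach space $\bH^1_0(\omega,\Omega)$; a closed subset of a Banach space is complete. Since Proposition~\ref{prop:contraction} guarantees that $\calT_1$ maps $\calB_A$ into itself and is a contraction there, the fixed point theorem yields a unique $\ue \in \calB_A$ with $\ue = \calT_1(\ue)$.

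Next I would promote this velocity to a genuine solution pair of \eqref{eq:NSEVar}. Setting $(\nu \ue, \pe) = \calT(\ue,0) = \calS^{-1}[\calF - \NL(\ue,0)]$ recovers the pressure $\pe$; by construction the velocity component of $\calT(\ue,0)$ is $\nu \calT_1(\ue) = \nu\ue$, so the pair is consistent. Applying $\calS$ and using that $\NL$ depends only on the velocity component, so that $\NL(\ue,0) = \NL(\ue,\pe)$, gives $\calS(\nu\ue,\pe) + \NL(\ue,\pe) = \calF$, which is exactly the operator form of \eqref{eq:NSEVar}. The a priori bound then follows by feeding $\ue \in \calB_A$ back into the estimate already obtained in the proof of Proposition~\ref{prop:contraction}: from $\ue = \calT_1(\ue)$ one has
\[
  \|\GRAD \ue\|_{\bL^2(\omega,\Omega)} \leq \frac{\|\calS^{-1}\|\,\|\bef\|_{\bH^{-1}(\omega,\Omega)}}{\nu} + \frac{C_{4\to2}^2\|\calS^{-1}\|}{\nu}\|\GRAD\ue\|_{\bL^2(\omega,\Omega)}^2 ,
\]
and, since $\|\GRAD\ue\|_{\bL^2(\omega,\Omega)} \leq A = \tfrac{\nu}{3C_{4\to2}^2\|\calS^{-1}\|}$, the quadratic term is bounded by $\tfrac13\|\GRAD\ue\|_{\bL^2(\omega,\Omega)}$; absorbing it on the left produces the constant $\tfrac32$ in the claimed estimate.

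For uniqueness, the fixed point theorem already gives that $\ue$ is the only fixed point of $\calT_1$ in $\calB_A$, hence the only solution whose velocity lies in that ball. To confirm this through the equations directly, I would run a difference argument: given two solutions, subtracting their operator equations, writing $\ue_1\otimes\ue_1 - \ue_2\otimes\ue_2 = (\ue_1-\ue_2)\otimes\ue_1 + \ue_2\otimes(\ue_1-\ue_2)$, and invoking Lemma~\ref{lem:convbdd} together with boundedness of $\calS^{-1}$ yields
\[
  \nu\|\GRAD(\ue_1-\ue_2)\|_{\bL^2(\omega,\Omega)} \leq C_{4\to2}^2\|\calS^{-1}\|\bigl(\|\GRAD\ue_1\|_{\bL^2(\omega,\Omega)} + \|\GRAD\ue_2\|_{\bL^2(\omega,\Omega)}\bigr)\|\GRAD(\ue_1-\ue_2)\|_{\bL^2(\omega,\Omega)} ,
\]
and, as long as both velocities satisfy the $A$-bound, the prefactor is at most $\tfrac23 < 1$, forcing $\ue_1 = \ue_2$ and then $\pe_1 = \pe_2$. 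The main obstacle I anticipate is not the contraction step, which is routine, but the bookkeeping around the pressure recovery and, above all, pinning down the \emph{sense} of uniqueness: the difference argument requires both solutions to lie in $\calB_A$, so what is actually proved is uniqueness within the class of small solutions, and one should be careful to state the conclusion accordingly rather than claiming a global uniqueness that the method does not deliver.
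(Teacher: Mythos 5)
Your proposal is correct and follows essentially the same route as the paper: Banach's fixed point theorem for $\calT_1$ on the closed set $\calB_A$, recovery of the pressure (where the paper cites the right inverse of the divergence from \cite[Theorem 3.1]{MR2731700}, while you use invertibility of $\calS$ directly --- equivalent in this setting), and the identical absorption argument, using $\|\GRAD\ue\|_{\bL^2(\omega,\Omega)} \leq A$ to bound the quadratic term by $\tfrac13\|\GRAD\ue\|_{\bL^2(\omega,\Omega)}$, which yields the factor $\tfrac32$. Your closing caveat is well taken: the contraction argument only delivers uniqueness among solutions with velocity in $\calB_A$, a restriction that the corollary's unqualified ``unique solution'' (and the paper's proof) leaves implicit.
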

\begin{proof}
By assumption the mapping $\calT_1$, defined in Proposition~\ref{prop:contraction} has a unique fixed point $\ue \in \calB_A$. From this, by using the existence of a right inverse of the divergence operator over $A_2$--weighted spaces \cite[Theorem 3.1]{MR2731700}, existence and uniqueness of the pressure $\pe$ follows as well.

To obtain the claimed estimate, we use the fact that this is a fixed point of $\calT_1$. Indeed, from this it follows that
\[
  \| \GRAD \ue \|_{\bL^2(\omega,\Omega)} \leq \frac{ \| \calS^{-1} \| \ \| \bef \|_{\bH^{-1}(\omega,\Omega)}}{\nu} 
    + \frac{ C_{4\to2}^2\| \calS^{-1} \| }{\nu} A \| \GRAD \ue \|_{\bL^2(\omega,\Omega)},
\]
where we used that $\ue \in \calB_A$. Using the value of $A$ defined in Proposition~\ref{prop:contraction} we have
\[
 \frac{C_{4\to2}^2\| \calS^{-1} \| 
  }{\nu} A = \frac{C_{4\to2}^2\| \calS^{-1} \| 
  }{\nu} \frac\nu{3C_{4\to2}^2\| \calS^{-1} \|} = 1/3,
\]
from which the result follows.
\qed\end{proof}

\subsection{Existence for general data}

We now show existence of solutions without smallness conditions. As in the energy setting, we do not say anything about uniqueness of solutions. We begin with a series of preparatory steps. Our first result is about compact embedding between weighted spaces. To state it, we must assume that the \emph{set of singularities} $\mathbf{S}_{\textup{sing}}(\omega)$, as defined in \cite[Definition 4.2]{MR2797702}, is compactly contained in $\Omega$.

\begin{proposition}[compact embedding]
\label{prop:compactembedding}
Let $\omega \in A_2$ and $\mathbf{S}_{\textup{sing}}(\omega) \Subset \Omega$. In this setting, the embedding $H^1_0(\omega,\Omega) \hookrightarrow L^4(\omega,\Omega)$ is compact.
\end{proposition}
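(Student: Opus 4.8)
The plan is to upgrade the continuous embedding of Lemma~\ref{lem:convbdd} to a compact one by exploiting the \emph{strict} higher integrability available in two dimensions, together with the fact that the weight degenerates only on the small set $\mathbf{S}_{\textup{sing}}(\omega)$. First I would record the extra room: by \cite[Theorem 1.3]{MR643158} and the discussion in Remark~\ref{rem:2d}, since $d=2$ there is $\delta>0$ such that $H^1_0(\omega,\Omega)\hookrightarrow L^{4+2\delta}(\omega,\Omega)$; set $s=4+2\delta>4$. Thus any sequence $\{u_n\}$ bounded in $H^1_0(\omega,\Omega)$ is also bounded in $L^s(\omega,\Omega)$, and $L^4$ sits strictly inside the admissible range. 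This subcriticality is precisely what will produce compactness, exactly as in the classical "strictly subcritical implies compact" argument, here transplanted to the weighted measure.

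Next I would localize away from the singular set. For $\eta>0$ set $\mathcal{N}_\eta=\{x\in\Omega:\mathrm{dist}(x,\mathbf{S}_{\textup{sing}}(\omega))<\eta\}$ and $\Omega_\eta=\Omega\setminus\overline{\mathcal{N}_\eta}$. Since $\mathbf{S}_{\textup{sing}}(\omega)\Subset\Omega$, for all small $\eta$ the set $\overline{\mathcal{N}_\eta}$ is a compact subset of $\Omega$, and I may arrange $\eta$ so that $\Omega_\eta$ is a Lipschitz domain. By the defining property of $\mathbf{S}_{\textup{sing}}(\omega)$, on the compact set $\overline{\Omega_\eta}\subset\overline\Omega\setminus\mathbf{S}_{\textup{sing}}(\omega)$ both $\omega$ and $\omega^{-1}$ are bounded, so $0<\omega_l\le\omega\le\omega_u<\infty$ there. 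Consequently the weighted and unweighted norms are equivalent on $\Omega_\eta$, and the ordinary Rellich--Kondrachov theorem gives the compact embedding $H^1(\Omega_\eta)\hookrightarrow\hookrightarrow L^4(\Omega_\eta)$, \ie $H^1(\omega,\Omega_\eta)\hookrightarrow\hookrightarrow L^4(\omega,\Omega_\eta)$.

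To conclude I would combine the two ingredients with a diagonal argument. Fixing a sequence $\eta_j\downarrow 0$ and extracting successively, I obtain a single subsequence of $\{u_n\}$, still denoted $\{u_n\}$, that converges in $L^4(\omega,\Omega_{\eta_j})$ for every $j$. Writing $\mu=\omega\diff x$, Hölder's inequality (valid since $s>4$) yields
\[
  \int_{\mathcal{N}_\eta}|u_n-u_m|^4\diff\mu \le \left(\int_{\mathcal{N}_\eta}|u_n-u_m|^{s}\diff\mu\right)^{4/s}\mu(\mathcal{N}_\eta)^{1-4/s}\le C\,\omega(\mathcal{N}_\eta)^{1-4/s},
\]
uniformly in $n,m$, because the sequence is bounded in $L^s(\omega,\Omega)$. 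Since $\mathbf{S}_{\textup{sing}}(\omega)$ is Lebesgue--null and $A_2$ weights are absolutely continuous, $\omega(\mathcal{N}_\eta)\to0$ as $\eta\to0$. Thus, given $\varepsilon>0$, I first choose $\eta$ small to make the $\mathcal{N}_\eta$ contribution below $\varepsilon$, and then invoke convergence in $L^4(\omega,\Omega_\eta)$ to control the remainder for large $n,m$; hence $\{u_n\}$ is Cauchy, and therefore convergent, in $L^4(\omega,\Omega)$, which is the asserted compactness.

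I expect the main obstacle to be the interface between the two regions: justifying that $\Omega_\eta$ can be taken Lipschitz so that unweighted Rellich--Kondrachov applies, and, more importantly, confirming from \cite[Definition 4.2]{MR2797702} that off $\mathbf{S}_{\textup{sing}}(\omega)$ the weight is genuinely bounded above and below on compact sets and that $\omega(\mathcal{N}_\eta)\to0$ as $\eta\to0$. Once these two structural facts about $\mathbf{S}_{\textup{sing}}(\omega)$ are in hand, everything else is the standard subcritical interpolation estimate carried out in the measure $\mu$.
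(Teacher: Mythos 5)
Your route is genuinely different from the paper's: the paper obtains the result in one stroke from the Haroske--Skrzypczak compactness theorem \cite[Theorem 4.12]{MR2797702} in the weighted Triebel--Lizorkin scale, using the open-ended property of $A_2$ to get $r_\omega<2$ and then checking $\delta=\frac12>(r_\omega-1)\left(\frac{n}{p_1}-\frac{n}{p_2}\right)$, whereas you attempt an elementary localization argument. Much of your scheme is sound: the strict higher integrability $H^1_0(\omega,\Omega)\hookrightarrow L^{4+2\delta}(\omega,\Omega)$ from \cite[Theorem 1.3]{MR643158} in two dimensions is correct, the uniform bounds $0<\omega_l\leq\omega\leq\omega_u$ on compact sets disjoint from $\mathbf{S}_{\textup{sing}}(\omega)$ follow from \cite[Definition 4.2]{MR2797702} by a covering argument, the H\"older tail estimate is right, and the regularity issue for $\Omega_\eta$ (which you flag) is repairable, e.g., by covering $\mathbf{S}_{\textup{sing}}(\omega)$ with finitely many balls compactly contained in $\Omega$ and extending the $H^1_0$ functions by zero.

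The genuine gap is the claim that $\mathbf{S}_{\textup{sing}}(\omega)$ is Lebesgue-null, on which the whole conclusion $\omega(\mathcal{N}_\eta)\to 0$ rests. This is false for general $A_2$ weights, even under the hypothesis $\mathbf{S}_{\textup{sing}}(\omega)\Subset\Omega$. Take a fat Cantor set $K\Subset\Omega$ with $|K|>0$, a countable dense subset $\{q_k\}\subset K$, and set $\omega(x)=1+\sum_{k\geq1}2^{-k}|x-q_k|^{-\beta}$ with $0<\beta<2$. Each summand is an $A_1$ weight with a constant depending only on $\beta$, and $A_1$ is stable under such positive combinations, so $\omega\in A_1\subset A_2$; yet $\omega$ is essentially unbounded on every ball meeting $K$ and locally bounded above and below off $K$, so $\mathbf{S}_{\textup{sing}}(\omega)=K$ and $\omega(K)\geq|K|>0$. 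For such a weight $\omega(\mathcal{N}_\eta)\to\omega(K)>0$, the tail term in your Cauchy estimate does not vanish, and the diagonal argument cannot close --- while the proposition, as proved in the paper, still holds, since the criterion of \cite[Theorem 4.12]{MR2797702} depends only on $r_\omega<2$ and not on the size of the singular set. Your argument therefore proves the proposition only under the additional hypothesis $|\mathbf{S}_{\textup{sing}}(\omega)|=0$ (equivalently $\omega(\mathbf{S}_{\textup{sing}}(\omega))=0$, by mutual absolute continuity of $\omega\diff x$ and $\diff x$); this covers the paper's motivating examples (Dirac deltas, measures supported on curves) but not the stated generality, which is precisely why the paper invokes the abstract compactness theorem instead.
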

\begin{proof}
The result follows from \cite[Theorem 4.12]{MR2797702}. To see this, we set, in the notation of that paper, $n=2$, $s_1 = 1$, $p_1 = q_1 = 2$, $s_2 = 0$, $p_2 =4$, $q_2 = 2$, and $A = F$, that is, we work in the weighted Triebel-Lizorkin scale. Notice that the open ended property of $A_2$ implies that $r_{\omega} = \inf \{ r \geq 1: \omega \in A_r \} < 2$. Thus, we have
\[
\delta = s_1 - \frac{n}{p_1} - s_2 + \frac{n}{p_2} = \frac{1}{2},
\qquad (r_{\omega} - 1)\left(\frac{n}{p_1} - \frac{n}{p_2}\right) =  \frac{1}{2} (r_{\omega} - 1) < \frac{1}{2}.
\]
The conclusion of \cite[Theorem 4.12]{MR2797702}, together with \cite[(2.15)]{MR2797702}, then states that the embedding $H^1(\omega,\Omega) = F_{2,2}^1(\omega,\Omega) \hookrightarrow F_{4,2}^0(\omega,\Omega) = L^4(\omega,\Omega)$ is compact.
\qed\end{proof}

We remark that, for $\omega \in A_2(\Omega)$ the assumption that $\mathbf{S}_{\textup{sing}}(\omega) \Subset \Omega$ is automatically satisfied.

\begin{corollary}[$\NL$ is compact]
In the setting of Proposition~\ref{prop:compactembedding}, the operator $\NL$ is bounded and compact.
\end{corollary}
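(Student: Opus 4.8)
The plan is to handle boundedness and compactness separately, keeping in mind throughout that $\NL$ depends only on the velocity component and is \emph{quadratic} in it; thus compactness should be read in the nonlinear sense that $\NL$ maps bounded subsets of $\calX$ into precompact subsets of $\calY'$. Boundedness is immediate from Lemma~\ref{lem:convbdd}: for $(\bu,p) \in \calX$ and any test pair $(\bv,q) \in \calY$ we have $|\langle \NL(\bu,p),(\bv,q)\rangle| \le C_{4\to2}^2 \| \GRAD \bu \|_{\bL^2(\omega,\Omega)}^2 \| \GRAD \bv \|_{\bL^2(\omega^{-1},\Omega)}$, so that $\| \NL(\bu,p) \|_{\calY'} \le C_{4\to2}^2 \| \GRAD \bu \|_{\bL^2(\omega,\Omega)}^2$, and bounded sets are sent to bounded sets.

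For compactness I would begin with a bounded sequence $\{(\bu_n,p_n)\} \subset \calX$ and invoke Proposition~\ref{prop:compactembedding}: since $\{\bu_n\}$ is bounded in $\bH^1_0(\omega,\Omega)$, the compact embedding $\bH^1_0(\omega,\Omega) \hookrightarrow \bL^4(\omega,\Omega)$ furnishes a subsequence (not relabeled) and a limit $\bu$ with $\bu_n \to \bu$ strongly in $\bL^4(\omega,\Omega)$. Because $\NL$ ignores its second argument, the natural target is $\NL(\bu,0)$, and the goal is to show $\NL(\bu_n,p_n) \to \NL(\bu,0)$ in $\calY'$. The key algebraic device is the telescoping identity $\bu_n \otimes \bu_n - \bu \otimes \bu = (\bu_n - \bu)\otimes \bu_n + \bu \otimes (\bu_n - \bu)$. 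Substituting this into the difference $\langle \NL(\bu_n,p_n) - \NL(\bu,0),(\bv,q)\rangle$ and applying to each resulting term the same weighted H\"older splitting as in the proof of Lemma~\ref{lem:convbdd} (pairing each velocity factor with $\omega^{1/4}$ and the gradient with $\omega^{-1/2}$, then using exponents $4,4,2$), one obtains, after taking the supremum over the unit ball of $\calY$,
\[
\| \NL(\bu_n,p_n) - \NL(\bu,0) \|_{\calY'} \le \| \bu_n - \bu \|_{\bL^4(\omega,\Omega)} \left( \| \bu_n \|_{\bL^4(\omega,\Omega)} + \| \bu \|_{\bL^4(\omega,\Omega)} \right).
\]
Since $\{\bu_n\}$ converges in $\bL^4(\omega,\Omega)$, the parenthesized factor is uniformly bounded while $\| \bu_n - \bu \|_{\bL^4(\omega,\Omega)} \to 0$; hence the right-hand side vanishes, and the image sequence converges in $\calY'$. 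This yields precompactness of $\NL$ on bounded sets.

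The only genuine subtlety—and the step I would be most careful about—is that the standard ``continuous operator composed with a compact embedding is compact'' reasoning does not apply verbatim, precisely because $\NL$ is nonlinear. What rescues the argument is that the quadratic nonlinearity is sequentially continuous for the \emph{strong} $\bL^4(\omega,\Omega)$ topology, and this is exactly what the telescoping identity together with the uniform $\bL^4(\omega,\Omega)$ bound delivers. I would also remark explicitly that no compactness is required in the pressure variable, since $p$ lies in the kernel of $\NL$; the entire argument is driven by the strong convergence of the velocities supplied by Proposition~\ref{prop:compactembedding}.
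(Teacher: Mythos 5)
Your proposal is correct and follows essentially the same route as the paper: boundedness from Lemma~\ref{lem:convbdd}, then the compact embedding of Proposition~\ref{prop:compactembedding} combined with the telescoping identity $\bu_n \otimes \bu_n - \bu \otimes \bu = (\bu_n - \bu)\otimes \bu_n + \bu \otimes (\bu_n - \bu)$ and the weighted H\"older estimate to get convergence in $\calY'$. The only (immaterial) difference is that the paper starts from a weakly convergent sequence in $\calX$ and shows weak-to-strong continuity, whereas you start from a bounded sequence and extract an $\bL^4(\omega,\Omega)$-convergent subsequence---equivalent formulations of compactness here, since $\calX$ is a reflexive (Hilbert) space.
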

\begin{proof}
Boundedness of this mapping follows from Lemma~\ref{lem:convbdd}.

To show compactness, let $(\bw_n,r_n) \rightharpoonup (\bw,r)$ in $\calX$, so that $\bw_n \rightharpoonup \bw $ in $\bH^1_0(\omega,\Omega)$ and, by the compact embedding of Proposition~\ref{prop:compactembedding}, $\bw_n \to\bw$ in $\bL^4(\omega,\Omega)$. Now
\begin{align*}
  \left|\langle \NL(\bw_n,r_n) - \NL(\bw,r), (\bv,q) \rangle\right| & = \left|\int_\Omega (\bw \otimes \bw - \bw_n \otimes \bw_n) :\GRAD \bv\right|
    \leq  \left|\int_\Omega \bw \otimes(\bw - \bw_n) : \GRAD \bv\right| + \left|\int_\Omega (\bw - \bw_n) \otimes \bw_n : \GRAD \bv\right| \\
   & \leq 
   \left(  \| \bw \|_{\bL^4(\omega,\Omega)} + \| \bw _n \|_{\bL^4(\omega,\Omega)} \right) \| \bw_n - \bw \|_{\bL^4(\omega,\Omega)} \| \GRAD \bv \|_{\bL^2(\omega^{-1},\Omega)}.
\end{align*}
This shows that
\[
  \left\| \NL(\bw_n,r_n) - \NL(\bw,r) \right\|_{\calY'} \to 0,
\]
and the compactness follows.
\qed\end{proof}

\begin{corollary}[$\calT$ is compact]
\label{cor:NLcompact}
In the setting of Proposition~\ref{prop:compactembedding}, the mapping $\calT$ is compact.
\end{corollary}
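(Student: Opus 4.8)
The plan is to read off compactness of $\calT$ from its factorization. By definition, $\calT(\bw,r) = \calS^{-1}\left[ \calF - \NL(\bw,r) \right]$, so $\calT$ is the composition of the (nonlinear) compact map $(\bw,r) \mapsto \calF - \NL(\bw,r)$ with the bounded linear operator $\calS^{-1}$. Since the composition of a continuous map after a compact one is again compact, the result should follow at once from the preceding corollary together with the boundedness of $\calS^{-1}$ recorded via \cite[Theorem 17]{OS:17infsup}.

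Concretely, I would argue sequentially, mirroring the proof that $\NL$ is compact. Since $\calX$ is a Hilbert space and hence reflexive, every bounded sequence $(\bw_n,r_n) \subset \calX$ admits a weakly convergent subsequence, which I relabel so that $(\bw_n,r_n) \rightharpoonup (\bw,r)$ in $\calX$. Compactness of $\NL$ then yields $\NL(\bw_n,r_n) \to \NL(\bw,r)$ strongly in $\calY'$. Because $\calF \in \calY'$ is a fixed element, translation by it preserves convergence, so $\calF - \NL(\bw_n,r_n) \to \calF - \NL(\bw,r)$ strongly in $\calY'$ as well. Applying the continuous linear operator $\calS^{-1}$ gives $\calT(\bw_n,r_n) \to \calT(\bw,r)$ strongly in $\calX$. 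Thus $\calT$ sends bounded sequences to sequences with strongly convergent subsequences, which is exactly the assertion that $\calT$ maps bounded sets to relatively compact sets; the same weak-to-strong property also delivers the continuity of $\calT$.

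I do not anticipate any genuine obstacle here, as the argument is purely structural. The only points deserving a word of care are that $\NL$, and hence $\calT$, is nonlinear, so compactness must be invoked in the weak-to-strong sequential sense established in the previous corollary rather than in the ``bounded-to-precompact'' form for linear maps; and that the fixed translation by $\calF$ and the harmless scaling by $\nu$ in the definition of $\calT$ play no role in the convergence. Both observations are immediate, so the proof reduces to the sequential chain of implications above.
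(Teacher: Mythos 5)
Your proposal is correct and takes essentially the same approach as the paper, whose proof is just the one-line observation that $\calS^{-1}$ is continuous and $\NL$ is compact. Your sequential weak-to-strong argument simply spells out the details that the paper leaves implicit.
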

\begin{proof}
This is immediate upon noting that $\calS^{-1}$ is continuous and $\NL$ is compact.
\qed\end{proof}

We are now in position to show our existence result.

\begin{theorem}[existence]
\label{thm:existence}
Let $\Omega$ be Lipschitz and $\omega \in A_2(\Omega)$. For every $\nu >0$ and $\bef \in ( \bH^1_0(\omega^{-1},\Omega) )'$, problem \eqref{eq:NSEVar} has at least one solution $(\ue,\pe) \in \calX$, which satisfies
\[
  \| (\ue,\pe)\|_\calX \leq C \| \calF \|_{\calY'}.
\]
\end{theorem}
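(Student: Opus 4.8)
The plan is to produce the sought fixed point of $\calT$ through the Leray--Schauder (Schaefer) fixed point theorem, for which Corollary~\ref{cor:NLcompact} already supplies the essential ingredient: $\calT$ is compact. It is convenient to work with the velocity map $\calT_1 : \bH^1_0(\omega,\Omega) \to \bH^1_0(\omega,\Omega)$, $\bw \mapsto \frac1\nu \Pr\calT(\bw,0)$, from Proposition~\ref{prop:contraction}; compactness of $\calT$ is inherited by $\calT_1$, and any fixed point $\ue$ of $\calT_1$ is divergence free and, after recovering the pressure through the right inverse of the divergence over $A_2$--weighted spaces \cite[Theorem 3.1]{MR2731700} exactly as in Corollary~\ref{cor:corContractionSmall}, yields a solution $(\ue,\pe)\in\calX$ of \eqref{eq:NSEVar}. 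By Schaefer's theorem such a fixed point exists once we prove that the set
\[
  \Sigma = \left\{ \bw \in \bH^1_0(\omega,\Omega) : \bw = \lambda \calT_1(\bw) \text{ for some } \lambda \in [0,1] \right\}
\]
is bounded in $\bH^1_0(\omega,\Omega)$, by a bound that may depend on $\nu$, $\omega$, $\Omega$ and $\calF$, but is uniform in $\lambda$.

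The heart of the matter, and the step I expect to be the main obstacle, is precisely this a priori bound. Unwinding the definitions, $\bw\in\Sigma$ means that $(\nu\bw, s) = \lambda \calS^{-1}[\calF - \NL(\bw,0)]$ for the associated pressure $s$, that is
\[
  \nu \int_\Omega \GRAD \bw : \GRAD \bv - \lambda \int_\Omega \bw \otimes \bw : \GRAD \bv - \int_\Omega s \DIV \bv = \lambda \langle \bef, \bv\rangle \quad \forall \bv \in \bH^1_0(\omega^{-1},\Omega),
\]
together with $\DIV \bw = 0$. In the classical energy setting one would now test with $\bv = \bw$ and use the antisymmetry $\int_\Omega \bw \otimes \bw : \GRAD \bw = 0$, valid for divergence-free $\bw$, to annihilate the convective term and deduce $\nu \| \GRAD \bw \|_{\bL^2(\Omega)}^2 = \lambda \langle \bef, \bw\rangle$, hence a bound linear in the data. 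Here this is obstructed: the trial space $\bH^1_0(\omega,\Omega)$ and the test space $\bH^1_0(\omega^{-1},\Omega)$ do not coincide when $\omega \not\equiv 1$, so $\bw$ is in general \emph{not} an admissible test function, and indeed $\int_\Omega |\GRAD \bw|^2$ need not even be finite for the singular solutions we wish to capture.

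To circumvent this I would recover the antisymmetry at the level of a Galerkin approximation built from smooth divergence-free fields. The key observation is that, since both $\omega$ and $\omega^{-1}$ are locally integrable $A_2$ weights, every $\bv \in C_c^\infty(\Omega)$ lies simultaneously in $\bH^1_0(\omega,\Omega)$ and $\bH^1_0(\omega^{-1},\Omega)$; hence a basis of such fields is admissible as both trial and test functions. On each finite-dimensional space the discrete solution may then be tested against itself, the convective term vanishes by the exact antisymmetry, and a discrete energy identity results; combined with the weighted Stokes estimate encoded by $\| \calS^{-1}\|$ and the compact embedding of Proposition~\ref{prop:compactembedding}, this should deliver the uniform weighted bound needed both to pass to the limit and to control $\Sigma$ independently of $\lambda$. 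The delicate point, which is where the weighted framework genuinely departs from the energy setting, is to reconcile the unweighted norm generated by the self-testing with the weighted norms in which the datum $\calF$ and the embedding are expressed; I expect this reconciliation, rather than the topological machinery, to be the true crux. Once $\Sigma$ is known to be bounded, Schaefer's theorem furnishes a fixed point $\ue$, the pressure $\pe$ is recovered as above, and substituting back into $(\nu\ue,\pe) = \calS^{-1}[\calF - \NL(\ue,\pe)]$ while invoking the established bound yields $\| (\ue,\pe)\|_\calX \leq C \| \calF \|_{\calY'}$.
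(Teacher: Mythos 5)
Your topological scaffolding (Schaefer's theorem, compactness of $\calT$ from Corollary~\ref{cor:NLcompact}, pressure recovery via the right inverse of the divergence) matches the paper, and you correctly diagnose why the classical energy trick $\bv=\bw$ is unavailable. But the heart of the proof --- the uniform bound on the set $\Sigma$ of solutions of $\bw = \lambda\calT_1(\bw)$, which is also exactly the estimate $\|(\ue,\pe)\|_\calX \le C\|\calF\|_{\calY'}$ asserted in the theorem --- is never established, and the Galerkin self-testing route you sketch cannot close it. Testing a smooth divergence-free Galerkin solution against itself yields the \emph{unweighted} Dirichlet energy, $\nu\|\GRAD\bw_m\|^2_{\bL^2(\Omega)} \le \|\bef\|_{(\bH^1_0(\omega^{-1},\Omega))'}\,\|\GRAD\bw_m\|_{\bL^2(\omega^{-1},\Omega)}$, and for a general $\omega\in A_2(\Omega)$ (which may vanish or blow up in the interior of $\Omega$) neither side is comparable to the weighted norms in which the problem is posed. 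Worse, for precisely the data this theory targets, say $\bef=\bF\delta_z$, the solution satisfies $\GRAD\ue\notin\bL^2(\Omega)$ while $\GRAD\ue\in\bL^2(\omega,\Omega)$; hence the unweighted energies of any Galerkin sequence converging to $\ue$ must blow up, and the ``reconciliation'' between the unweighted identity and the weighted norms that you yourself flag as the crux is not merely delicate but impossible. There is no a priori estimate obtainable by testing in this framework.

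The paper closes this gap by an entirely estimate-free device: a compactness contradiction argument (the inequality \eqref{eq:ADN} is proved by what the authors call the Avron--Douglis--Nirenberg argument). If the bound fails, take a normalized sequence $\|(\ue_n,\pe_n)\|_\calX = 1$ whose residuals $\calF_n = \calS(\nu\ue_n,\pe_n)+\NL(\ue_n,\pe_n)$ tend to zero in $\calY'$; extract a weakly convergent subsequence, use the compact embedding $H^1_0(\omega,\Omega)\hookrightarrow L^4(\omega,\Omega)$ of Proposition~\ref{prop:compactembedding} to get $\ue_n\to\ue$ strongly in $\bL^4(\omega,\Omega)$, identify the limit as a solution of the homogeneous problem, and conclude $\ue=\boldsymbol0$ from the small-data uniqueness of Corollary~\ref{cor:corContractionSmall} applied with $\bef=0$ (for which \eqref{eq:smallness} holds trivially). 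The normalization then contradicts $1 \le C_1\|\calF_n\|_{\calY'} + C_2\|\ue_n\|^2_{\bL^4(\omega,\Omega)} \to 0$, which follows from boundedness of $\calS^{-1}$ and Lemma~\ref{lem:convbdd}. Note that you already possess every ingredient of this argument --- the compact embedding, uniqueness at zero data, boundedness of $\calS^{-1}$ and $\NL$ --- but the missing idea is to trade the unavailable energy estimate for compactness plus uniqueness of the homogeneous problem, rather than attempting to manufacture an energy identity.
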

\begin{proof}
We will show existence by showing that $\calT$ has a fixed point. Notice, first, that if $\calF = 0$ then we are in the setting of Corollary~\ref{cor:corContractionSmall} so that the only solution to the homogeneous problem is $(\ue,\pe) = (\boldsymbol0,0)$.

To show existence for general data we will invoke Schaefer's fixed point theorem \cite[Theorem 9.2.4]{MR2597943}. Let $\lambda \in (0,1]$ and assume that $(\ue,\pe) \in \calX$ is such that
\[
  (\nu \ue,\pe) = \lambda \calT(\ue,\pe) = \lambda \calS^{-1} \left[ \calF - \NL(\ue,\pe) \right].
\]
Since $\calS^{-1}$ is bounded, we have
\begin{equation*}
  \| (\nu \ue,\pe) \|_\calX \leq \lambda \| \calS^{-1} \calF \|_\calX + \lambda \| \calS^{-1} \NL(\ue,\pe) \|_\calX \leq C \| \calF \|_{\calY'} + \| \NL(\ue,\pe) \|_{\calY'} 
  \leq C \left( \| \calF \|_{\calY'} + \| \ue \|_{\bL^4(\omega,\Omega)}^2 \right),
\end{equation*}
where in the last step we used the boundedness of $\NL$ as shown in Lemma~\ref{lem:convbdd}. 

We claim now that
\begin{equation}
\label{eq:ADN}
  \| (\ue,\pe) \|_\calX \leq C \| \calF \|_{\calY'},
\end{equation}
for if this is the case, then we can conclude the existence of a fixed point for the compact operator $\calT$, which is equivalent to existence of a solution for \eqref{eq:NSEVar}.

Showing \eqref{eq:ADN} can be carried out by the usual Avron--Douglis--Nirenberg contradiction. If the inequality is false, there is $\{(\ue_n,\pe_n)\}_{n \in \mathbb{N}}$ with $\| (\ue_n,\pe_n) \|_\calX = 1$ such that $\calF_n = \calS(\nu \ue_n,\pe_n) + \NL(\ue_n,\pe_n)$ satisfies $\| \calF_n \|_{\calY'} \to 0$. There is then a (not relabeled) subsequence $(\ue_n,\pe_n)  \rightharpoonup (\ue,\pe)$ in $\calX$ and $\ue_n \to \ue$ in $\bL^4(\omega,\Omega)$. Using uniqueness of solutions for the homogeneous problems we obtain that $\ue = \boldsymbol0$. However, this implies that
\[
  1 \leq  C_1 \| \calF_n \|_{\calY'} + C_2 \| \ue_n \|_{\bL^4(\omega,\Omega)}^2 \to 0,
\]
which is a contradiction.
\qed\end{proof}

\section{Discretization}
\label{sec:Discretization}

We now propose a finite element scheme to approximate the solution of \eqref{eq:NSEVar}. In what follows we will assume that $\Omega$ is a convex polygon, and that the weight is such that $\omega \in A_2(\Omega)$ and either $\omega$ or $\omega^{-1}$ belong to $A_1$. We let $\bX_h \times M_h \subset \calX$ be an admissible pair of finite element spaces, in the sense that they satisfy all the assumptions of \cite{DuranOtarolaAJS}. In this setting the Stokes projection onto $\bX_h \times M_h$ is stable in $\bH^1_0(\omega,\Omega) \times L^2(\omega,\Omega)$; see \cite[Theorem 4.1]{DuranOtarolaAJS}. This means that $\calS_h$, the discrete version of $\calS$, is a bounded linear operator whose inverse $\calS_h^{-1}$ is bounded uniformly with respect to $h$. We will make use of this fact in the error analysis.

We now define the discrete problem as: Find $(\bu_h,p_h) \in \bX_h \times M_h$ such that
\begin{equation}
\label{eq:Stokesh}
  \int_\Omega \left(\nu\GRAD \bu_h : \GRAD \bv_h - \bu_h \otimes \bu_h : \GRAD \bv_h - p_h \DIV \bv_h \right)= \langle \bef, \bv_h \rangle \ \forall \bv_h \in \bX_h, \quad
  \int_\Omega \DIV \bu_h q_h = 0 \ \forall q_h \in M_h.
\end{equation}

\subsection{Discretization for small data}
\label{sub:Discr}

We follow \cite[Chapter IV.3.1]{MR548867}, with suitable modifications to take into account that we are not in an energy framework anymore, \ie while setting $\bv_h = \bu_h$ is allowed, it does not lead to suitable estimates. We begin with the following existence and uniqueness result.

\begin{corollary}[existence and uniqueness]
Assume that either $\bef$ is sufficiently small or $\nu$ sufficiently large so that \eqref{eq:smallness} with $\calS^{-1}$ replaced by $\calS_h^{-1}$ holds. Then there is a unique $(\bu_h,p_h) \in X_h \times M_h$ that solves \eqref{eq:Stokesh}. Moreover, we have an estimate similar to that of Corollary~\ref{cor:corContractionSmall}. 
\end{corollary}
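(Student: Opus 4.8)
The plan is to transcribe the contraction argument of Proposition~\ref{prop:contraction} and Corollary~\ref{cor:corContractionSmall} to the discrete level, the crucial enabling fact being that $\calS_h^{-1}$ is bounded uniformly in $h$. First I would introduce the discrete counterpart $\calT_h$ of $\calT$, namely $(\nu \bu_h, p_h) = \calT_h(\bw_h, r_h) = \calS_h^{-1}[\calF - \NL(\bw_h, r_h)]$, together with its velocity reduction $\calT_{1,h} : \bX_h \to \bX_h$, $\bw_h \mapsto \tfrac1\nu \Pr \calT_h(\bw_h, 0)$. A fixed point of $\calT_{1,h}$ produces, through the discrete inf--sup stability encoded in $\calS_h$, a solution of \eqref{eq:Stokesh}; thus it suffices to produce such a fixed point by a contraction argument.

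Next I would replace the solenoidal ball $\calB_A$ by its discrete analogue
\[
  \calB_{A,h} = \Big\{ \bw_h \in \bX_h : \textstyle\int_\Omega \DIV \bw_h\, q_h = 0 \ \forall q_h \in M_h, \ \| \GRAD \bw_h \|_{\bL^2(\omega,\Omega)} \leq A \Big\},
\]
of radius $A = \nu / (3 C_{4\to2}^2 \|\calS_h^{-1}\|)$. Because $\bX_h \subset \bH^1_0(\omega,\Omega)$, the embedding $\bH^1_0(\omega,\Omega) \hookrightarrow \bL^4(\omega,\Omega)$ and the convection bound of Lemma~\ref{lem:convbdd} apply to discrete functions with the very same constant $C_{4\to2}$. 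The two estimates of Proposition~\ref{prop:contraction} then go through verbatim: under \eqref{eq:smallness} with $\calS^{-1}$ replaced by $\calS_h^{-1}$, the self-map bound gives $\|\GRAD \calT_{1,h}(\bw_h)\|_{\bL^2(\omega,\Omega)} < \tfrac{5A}{6}$ for $\bw_h \in \calB_{A,h}$, while the bilinearity of $\NL$ yields the contraction constant $\tfrac23$. Banach's fixed point theorem furnishes a unique $\bu_h \in \calB_{A,h}$, stability of $\calS_h$ recovers a unique $p_h$, and the a priori estimate follows by repeating the fixed-point manipulation of Corollary~\ref{cor:corContractionSmall}.

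The one genuinely new point, and the step I would watch most carefully, is that $A$ and the contraction constant remain independent of $h$. This is exactly what the $h$-uniform boundedness of $\calS_h^{-1}$ (the stability of the weighted Stokes projection, \cite[Theorem 4.1]{DuranOtarolaAJS}) delivers, and is precisely why the hypothesis is stated in terms of $\calS_h^{-1}$. A lesser subtlety is that $\calB_{A,h}$ consists only of \emph{discretely} divergence-free velocities; but neither the embedding nor the estimate for $\NL$ ever uses pointwise incompressibility, and the discrete constraint is exactly what $\calS_h$ enforces, so the argument is unaffected.
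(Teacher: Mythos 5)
Your proposal is correct and follows essentially the same route as the paper, whose proof simply states that one repeats Proposition~\ref{prop:contraction} and Corollary~\ref{cor:corContractionSmall} with $\calS^{-1}$ replaced by the $h$-uniformly bounded $\calS_h^{-1}$; you have merely spelled out the details (the discrete ball $\calB_{A,h}$, the unchanged embedding constant since $\bX_h \subset \bH^1_0(\omega,\Omega)$, and the observation that Lemma~\ref{lem:convbdd} never uses pointwise incompressibility). Your identification of the $h$-uniform boundedness of $\calS_h^{-1}$ as the crucial point is exactly the one remark the paper's proof makes.
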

\begin{proof}
We repeat the proofs of Proposition~\ref{prop:contraction} and Corollary~\ref{cor:corContractionSmall}. The only point worth mentioning is that, instead of $\calS^{-1}$ we use the inverse of the discrete Stokes operator which, as we have previously stated, is uniformly bounded with respect to $h$.
\qed\end{proof}

With these results at hand we can obtain an error estimate.

\begin{theorem}[error estimate]
Assume that $\bef$ is sufficiently small or $\nu$ sufficiently large so that \eqref{eq:NSEVar} and \eqref{eq:Stokesh} have a unique solution, with sufficiently small norms. Then we have
\[
  \| \GRAD(\ue - \bu_h ) \|_{\bL^2(\omega,\Omega)} \leq C \left(  \inf_{\bw_h \in \bX_h} \| \GRAD( \ue - \bw_h) \|_{\bL^2(\omega,\Omega)} + \inf_{q_h \in M_h} \| \pe - q_h \|_{\bL^2(\omega,\Omega)} \right),
\]
where the constant $C$ may depend on $\bef$, $\nu$ and $\ue$, but is independent of $h$.
\end{theorem}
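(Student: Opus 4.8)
The plan is to carry out a weighted C\'ea-type (quasi-optimality) argument, in the spirit of \cite[Chapter IV.3.1]{MR548867}, in which the uniform boundedness of $\calS_h^{-1}$ controls the discrete part of the error while the smallness hypothesis absorbs the nonlinearity.

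The first, and slightly delicate, step is to obtain Galerkin orthogonality, which is not automatic: \eqref{eq:NSEVar} is tested against $\bH^1_0(\omega^{-1},\Omega)$ while \eqref{eq:Stokesh} is tested against $\bX_h\subset\bH^1_0(\omega,\Omega)$. However, because $\omega\in A_2$ forces $\omega^{-1}\in L^1(\Omega)$ on the bounded domain $\Omega$, and every $\bv_h\in\bX_h$ is piecewise polynomial with $\GRAD\bv_h\in\bL^\infty(\Omega)$, we have $\|\GRAD\bv_h\|_{\bL^2(\omega^{-1},\Omega)}^2\le\|\GRAD\bv_h\|_{\bL^\infty(\Omega)}^2\,\|\omega^{-1}\|_{L^1(\Omega)}<\infty$, so that $\bX_h\subset\bH^1_0(\omega^{-1},\Omega)$. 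Hence \eqref{eq:NSEVar} may legitimately be tested against any $\bv_h\in\bX_h$, and subtracting \eqref{eq:Stokesh} gives, for all $(\bv_h,q_h)\in\bX_h\times M_h$,
\begin{equation*}
\nu\int_\Omega\GRAD(\ue-\bu_h):\GRAD\bv_h-\int_\Omega(\ue\otimes\ue-\bu_h\otimes\bu_h):\GRAD\bv_h-\int_\Omega(\pe-p_h)\DIV\bv_h=0,\qquad \int_\Omega\DIV(\ue-\bu_h)\,q_h=0.
\end{equation*}

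Next I would fix an arbitrary $(\bw_h,r_h)\in\bX_h\times M_h$ and split $\ue-\bu_h=\boldsymbol{\eta}+\bphi_h$ with $\boldsymbol{\eta}=\ue-\bw_h$ and $\bphi_h=\bw_h-\bu_h\in\bX_h$, and $\pe-p_h=(\pe-r_h)+(r_h-p_h)$. Inserting this into the orthogonality relation shows that the discrete pair $(\bphi_h,r_h-p_h)$ solves a discrete Stokes problem whose right-hand side, tested against $(\bv_h,q_h)$, consists of the linear consistency terms $-\nu\int_\Omega\GRAD\boldsymbol{\eta}:\GRAD\bv_h+\int_\Omega(\pe-r_h)\DIV\bv_h-\int_\Omega\DIV\boldsymbol{\eta}\,q_h$ together with the nonlinear term $\int_\Omega(\ue\otimes\ue-\bu_h\otimes\bu_h):\GRAD\bv_h$. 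Applying the uniform bound on $\calS_h^{-1}$, I would estimate $\|\GRAD\bphi_h\|_{\bL^2(\omega,\Omega)}$ by the $\calY'$ norm of this residual functional. The linear terms are handled by the weighted Cauchy--Schwarz inequality together with $\|\DIV\boldsymbol{\eta}\|_{\bL^2(\omega,\Omega)}\le\sqrt2\,\|\GRAD\boldsymbol{\eta}\|_{\bL^2(\omega,\Omega)}$, and contribute multiples of $\|\GRAD\boldsymbol{\eta}\|_{\bL^2(\omega,\Omega)}$ and $\|\pe-r_h\|_{\bL^2(\omega,\Omega)}$.

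The crux is the nonlinear term. Writing $\ue\otimes\ue-\bu_h\otimes\bu_h=\ue\otimes(\ue-\bu_h)+(\ue-\bu_h)\otimes\bu_h$ and invoking the bilinear version of the bound proved in Lemma~\ref{lem:convbdd}, this term is dominated by $C_{4\to2}\big(\|\ue\|_{\bL^4(\omega,\Omega)}+\|\bu_h\|_{\bL^4(\omega,\Omega)}\big)\|\GRAD(\ue-\bu_h)\|_{\bL^2(\omega,\Omega)}\|\GRAD\bv_h\|_{\bL^2(\omega^{-1},\Omega)}$, after which $\|\GRAD(\ue-\bu_h)\|_{\bL^2(\omega,\Omega)}\le\|\GRAD\boldsymbol{\eta}\|_{\bL^2(\omega,\Omega)}+\|\GRAD\bphi_h\|_{\bL^2(\omega,\Omega)}$. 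By the smallness guaranteed through Corollary~\ref{cor:corContractionSmall} and its discrete analogue, the prefactor $\kappa:=C\,\|\calS_h^{-1}\|\,C_{4\to2}(\|\ue\|_{\bL^4(\omega,\Omega)}+\|\bu_h\|_{\bL^4(\omega,\Omega)})$ is strictly less than one, so the contribution $\kappa\,\|\GRAD\bphi_h\|_{\bL^2(\omega,\Omega)}$ can be absorbed into the left-hand side. A final triangle inequality and the infimum over $(\bw_h,r_h)$ then deliver the stated estimate, with $C$ depending on $\bef,\nu,\ue$ exactly through this absorption. I expect the main obstacle to be precisely this absorption step: establishing that the thresholds in \eqref{eq:smallness} (now with $\calS_h^{-1}$) indeed force $\kappa<1$ uniformly in $h$, which is where the non-energy, Petrov--Galerkin character of the discretization interacts with the nonlinearity.
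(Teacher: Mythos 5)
Your proposal is correct, and it implements the same overall strategy as the paper---control the discrete part of the error through the $h$-uniform boundedness of $\calS_h^{-1}$, then absorb the nonlinear contribution using the smallness of $\ue$ and $\bu_h$---but with a different decomposition. The paper splits $\ue-\bu_h=(\ue-S_h\ue)+(S_h\ue-\bu_h)$ using the Stokes projection: by definition of $S_h$ the linear consistency terms cancel, so the error equation for $\be_h=S_h\ue-\bu_h$ carries \emph{only} the nonlinear residual and zero divergence data, while the projection error $\ue-S_h\ue$ is handled by the best-approximation property \cite[Corollary 4.2]{DuranOtarolaAJS}. You instead take an arbitrary pair $(\bw_h,r_h)$, Strang-style, so your discrete residual retains the linear terms in $\GRAD\boldsymbol{\eta}$, $\pe-r_h$, and $\DIV\boldsymbol{\eta}$, which you bound by weighted Cauchy--Schwarz. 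What this buys you is independence from \cite[Corollary 4.2]{DuranOtarolaAJS}---you need only the uniform invertibility of $\calS_h$---at the cost of invoking discrete stability for a saddle-point problem with nonzero divergence data $-\DIV\boldsymbol{\eta}$; this is legitimate within the paper's framework, since the paper asserts that $\calS_h^{-1}$ is uniformly bounded as the inverse of the full discrete Stokes operator, and your residual is admissible because $\int_\Omega\DIV\boldsymbol{\eta}=0$ (both $\ue$ and $\bw_h$ vanish on $\partial\Omega$ and $\DIV\ue=0$), so the divergence functional is well defined on the quotient $L^2(\omega^{-1},\Omega)/\R$. Your opening observation that $\bX_h\subset\bH^1_0(\omega^{-1},\Omega)$---piecewise-polynomial gradients are bounded and $\omega^{-1}\in L^1(\Omega)$ because $\omega\in A_2$ on a bounded domain---is a genuine bonus: the paper uses this fact only implicitly when it tests the continuous problem with discrete functions to derive its error equation. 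Finally, the absorption step you flag as the main obstacle is resolved exactly as in the paper: uniformity in $h$ of the threshold $\kappa<1$ follows from the $h$-uniform bound on $\|\calS_h^{-1}\|$ together with the discrete a priori estimate of the discrete existence corollary, which is precisely what the hypothesis ``sufficiently small norms'' encodes.
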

\begin{proof}
We split the difference $\ue - \bu_h = (\ue - S_h \ue) + (S_h \ue - \bu_h)$, where $S_h \ue$ is the velocity component of the Stokes projection of $(\ue,\pe)$. Owing to \cite[Corollary 4.2]{DuranOtarolaAJS} we have
\[
  \| \GRAD(\ue - S_h \ue ) \|_{\bL^2(\omega,\Omega)} \leq C \left( \inf_{\bw_h \in \bX_h} \| \GRAD( \ue - \bw_h) \|_{\bL^2(\omega,\Omega)} + \inf_{q_h \in M_h} \| \pe - q_h \|_{\bL^2(\omega,\Omega)} \right).
\]

Let $\be_h = S_h \ue - \bu_h$ and $\vare_h = S_h \pe - p_h$ and note that,
\[
  \begin{dcases}
    \int_\Omega \left(\nu\GRAD \be_h : \GRAD \bv_h - \vare_h \DIV \bv_h \right)= \int_\Omega \left( \ue \otimes \ue - \bu_h \otimes \bu_h \right): \bv_h  & \forall \bv_h \in \bX_h, \\
    \int_\Omega \DIV \be_h q_h = 0 & \forall q_h \in M_h.
  \end{dcases}
\]
The discrete stability of the Stokes projection shown in \cite[Theorem 4.1]{DuranOtarolaAJS} then implies
\begin{equation*}
  \| \GRAD \be_h \|_{\bL^2(\omega,\Omega)} + \| \vare_h \|_{L^2(\omega,\Omega)} \leq C_{4\to2}^2 \left( \| \GRAD \ue \|_{\bL^2(\omega,\Omega)} + \| \GRAD \bu_h \|_{\bL^2(\omega,\Omega)} \right)  \| \GRAD (\ue - \bu_h) \|_{\bL^2(\omega,\Omega)}.
\end{equation*}
We thus collect the derived estimates to arrive at
\begin{multline*}
  \| \GRAD(\ue - \bu_h ) \|_{\bL^2(\omega,\Omega)} \leq C\left( \inf_{\bw_h \in \bX_h} \| \GRAD( \ue - \bw_h) \|_{\bL^2(\omega,\Omega)} + \inf_{q_h \in M_h} \| \pe - q_h \|_{\bL^2(\omega,\Omega)}  \right) \\ +
  C_{4\to2}^2 \left( \| \GRAD \ue \|_{\bL^2(\omega,\Omega)} + \| \GRAD \bu_h \|_{\bL^2(\omega,\Omega)} \right) \| \GRAD (\ue - \bu_h) \|_{\bL^2(\omega,\Omega)}.
\end{multline*}
The assumption that $\ue$ and $\bu_h$ are sufficiently small allow us to absorb the last term on the right hand side of this inequality into the left and conclude.
\qed\end{proof}

\end{document}